\newtheorem{thmenum}{Theorem}[section]
\newtheorem{prpenum}[thmenum]{Proposition}
\newtheorem{corenum}[thmenum]{Corollary}
\theoremstyle{definition}
\newtheorem{dfnenum}[thmenum]{Definition}
\newtheorem{exmenum}[thmenum]{Example}
\theoremstyle{remark}
\newtheorem{rmkenum}[thmenum]{Remark}
\def\Tm{{\mathcal T}}
\def\Xm{{\mathcal X}}
\def\Ym{{\mathcal Y}}
\def\Zm{{\mathcal Z}}
\def\Integer{{\mathbb Z}}
\def\add{\operatorname{add}}
\def\mod{\operatorname{mod}}
\def\Hom{\operatorname{Hom}}
\def\Ext{\operatorname{Ext}}
\def\pro{\operatorname{proj}}
\def\iso{\cong}
\def\La{\Lambda}
\def\td{\mathrm{\mathop{tri.dim}}}
\def\gd{\mathrm{\mathop{gl.dim}}}
\def\id{\mathrm{\mathop{inj.dim}}}
\def\ya#1{\overset{#1}{\longrightarrow}}
\def\blank{\operatorname{-}}
\title{Relative derived dimensions for cotilting modules}
\author{Michio Yoshiwaki}
\address{
\begin{flushleft}
        \hspace{0.3cm}  Department of Mathematics \\
         \hspace{0.3cm}  Faculty of Science\\
         \hspace{0.3cm}  Shizuoka University \\
         \hspace{0.3cm}  836 Ohya, Suruga-ku, Shizuoka, 422-8529, JAPAN\\
\end{flushleft}
\begin{flushleft}
	\hspace{0.3cm} Osaka City University Advanced Mathematical Institute,\\
	\hspace{0.3cm} 3-3-138 Sugimoto, Sumiyoshi-ku, Osaka, 558-8585, JAPAN\\
\end{flushleft}
}
\email{yoshiwaki.michio@shizuoka.ac.jp}
\date{\today}
\subjclass[2010]{16E10, 16E35, 16G50, 18E30}
\keywords{Dimension of triangulated category; Derived category; Cotilting module; Cohen-Macaulay module}
\thanks{The author was partially supported by JST (Japan Science and Technology Agency) CREST Mathematics Grant  (15656429)}
\begin{document}
\maketitle
\begin{abstract}
For a Noetherian ring $R$ and
a cotilting $R$-module $T$  of injective dimension at least $1$, 
we prove that the derived dimension of $R$ 
with respect to the category $\Xm_T$ 
is precisely the injective dimension of $T$ by applying Auslander-Buchweitz theory
and Ghost Lemma.
In particular, when $R$ is a commutative Noetherian local ring with a canonical module $\omega_R$
and $\dim R\ge1$, the derived dimension of R with respect to the category of maximal
Cohen-Macaulay modules is precisely $\dim R$.
\end{abstract}

\section{Introduction} \label{section:Introduction}

The notion of dimension of a triangulated category was introduced by Rouquier \cite{R2}
based on work of Bondal and Van den Bergh \cite{BV} on Brown representability.
A relative version of this notion was introduced in \cite{ABIM}, which
counts how many extensions are needed to build the triangulated category out of a given subcategory.
The aim of this paper is to give an explicit value of the relative dimension when
the subcategory is associated with a cotilting modules.

In this paper, 
we denote by $R$ a Noetherian ring. 
All $R$-modules are finitely generated right $R$-modules. 
We denote by $\mod R$ the abelian category of $R$-modules and 
by $\mathsf{D}^{\mathrm{b}}(\mod R)$ the derived category of $\mod R$.

Then our main result is the following, which completes a main result Theorem 5.3 in \cite{AAITY}.
\begin{thmenum} \label{CD}
Let $R$ be a Noetherian ring  
and $T$ a cotilting $R$-module with $ \id\ T\geq 1$. 
Then we have an equality 
$$\Xm_T \mbox{--} \td\ \mathsf{D}^{\mathrm{b}}(\mod R) = \id\  T.$$
\end{thmenum}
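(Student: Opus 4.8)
Set $d=\id\ T$. The plan is to prove the two inequalities $\Xm_T\mbox{--}\td\ \mathsf{D}^{\mathrm b}(\mod R)\le d$ and $\Xm_T\mbox{--}\td\ \mathsf{D}^{\mathrm b}(\mod R)\ge d$ separately, writing $\langle\Xm_T\rangle_n$ for the $n$-th step of the relative thickening of $\Xm_T$ inside $\mathsf{D}^{\mathrm b}(\mod R)$. The facts I would take from Auslander--Buchweitz theory are that $\Xm_T$ is a resolving, contravariantly finite subcategory whose core is $\add T$, and that every $M\in\mod R$ admits a proper $\Xm_T$-resolution $0\to X_d\to\cdots\to X_0\to M\to 0$ of length at most $d$, with each $X_i\in\Xm_T$ and each induced map a right $\Xm_T$-approximation; the bound $d$ is exactly $\id\ T$ because $\Ext^i_R(X,T)=0$ for $X\in\Xm_T$ and $i>0$ forces $\Ext^{>\mathrm{resdim}}_R(M,T)=0$.

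For the upper bound, first note that a length-$d$ $\Xm_T$-resolution exhibits each module $M$, viewed as a stalk complex, as an iterated extension of the $d+1$ stalks $X_0,\dots,X_d$, so that $M\in\langle\Xm_T\rangle_{d+1}$. To promote this from modules to an arbitrary $C\in\mathsf{D}^{\mathrm b}(\mod R)$ without the homological width of $C$ inflating the count, I would build a tower of proper $\Xm_T$-approximations directly in the derived category, namely triangles $C_{i+1}\to X_i\to C_i\to C_{i+1}[1]$ with $X_i\in\langle\Xm_T\rangle_1$ and $C_0=C$, where each chosen map is $\Hom(\Xm_T,-)$-surjective. The essential point, supplied by $\id\ T=d$, is that this tower terminates after $d$ steps with $C_d\in\langle\Xm_T\rangle_1$, because the obstruction to $C_i$ lying in $\langle\Xm_T\rangle_1$ is governed by $\Ext^{>0}_R(-,T)$, which is exhausted after $d$ stages; the tower then yields $C\in\langle\Xm_T\rangle_{d+1}$.

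For the lower bound I would invoke the Ghost Lemma. Since $\id\ T=d$, choose $N\in\mod R$ with $\Ext^d_R(N,T)\ne 0$; by the dimension shift above this forces the $\Xm_T$-resolution dimension of $N$ to be exactly $d$. Take a proper $\Xm_T$-resolution of $N$ of length $d$ and break it into short exact sequences $0\to Z_{i+1}\to X_i\to Z_i\to 0$ with $Z_0=N$ and $Z_d=X_d\in\Xm_T$, giving triangles $Z_{i+1}\to X_i\to Z_i\xrightarrow{\delta_i}Z_{i+1}[1]$. Each $\delta_i$ is an $\Xm_T$-ghost: for $X\in\Xm_T$ the map $\Hom(X,X_i)\to\Hom(X,Z_i)$ is surjective precisely because $X_i\to Z_i$ is a right $\Xm_T$-approximation, so $\Hom(X,\delta_i)=0$. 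The composite $\delta_{d-1}[d-1]\circ\cdots\circ\delta_0\colon N\to X_d[d]$ is then a composite of $d$ ghosts, and it is nonzero: its vanishing would split the top of the resolution and force the $\Xm_T$-resolution dimension of $N$ below $d$, contradicting $\Ext^d_R(N,T)\ne 0$. By the Ghost Lemma a nonzero composite of $d$ ghosts starting at $N$ certifies $N\notin\langle\Xm_T\rangle_d$, whence $\Xm_T\mbox{--}\td\ \mathsf{D}^{\mathrm b}(\mod R)\ge d$.

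Combining the two inequalities gives the asserted equality. The step I expect to be the main obstacle is the lower bound --- specifically, simultaneously verifying that each connecting map $\delta_i$ is a genuine $\Xm_T$-ghost (which rests on using proper, approximation-type resolutions rather than arbitrary ones) and that their $d$-fold composite is nonzero. This is exactly the point at which the value is pinned from below by $\id\ T$, and it is also where the hypothesis $\id\ T\ge 1$ enters, since for an injective cotilting module $\Xm_T=\mod R$ and the ghost composite degenerates.
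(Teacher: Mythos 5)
Your strategy for the lower bound is the same as the paper's: choose $N$ with $\Ext^d_R(N,T)\neq 0$, take the Auslander--Buchweitz proper $\Xm_T$-resolution of length $d$, split it into short exact sequences, read the connecting morphisms as $\Xm_T$-ghosts in $\mathsf{D}^{\mathrm{b}}(\mod R)$, and apply the Ghost Lemma to conclude $N\notin\langle\Xm_T\rangle_d$. (The upper bound is not reproved in the paper at all --- it is quoted from \cite[Theorem 5.3]{AAITY} --- and your sketch of it is the least substantiated part of your write-up: the claim that the approximation tower for an arbitrary complex $C$ terminates after $d$ steps inside $\langle\Xm_T\rangle_1$, with the obstruction ``governed by $\Ext^{>0}_R(-,T)$'', is asserted rather than argued, and $\langle\Xm_T\rangle_1$ consists only of sums of shifted stalk complexes, so this is not a routine reduction.)

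The genuine gap is in your verification that each $\delta_i$ is a ghost. The Ghost Lemma requires $f_i$ to vanish on a subcategory equal to its own shift-closure, here $\langle\Xm_T\rangle$, since $\langle\Xm_T\rangle_d$ is built from arbitrary shifts of objects of $\Xm_T$. You check only the degree-zero statement, that $\Hom_R(X,X_i)\to\Hom_R(X,Z_i)$ is surjective and hence $\Hom(X,\delta_i)=0$. What is actually needed is that $\Ext^k_R(X,Z_i)\to\Ext^{k+1}_R(X,Z_{i+1})$ vanishes for \emph{every} $k$. For $k<0$ this is automatic, but for $k\geq 1$ it does not follow from the approximation property: when $X_i\in\add T$ the long exact sequence makes this connecting map an isomorphism $\Ext^k_R(X,Z_i)\cong\Ext^{k+1}_R(X,Z_{i+1})$, so one genuinely needs the target to vanish, i.e.\ $Z_{i+1}\in\Ym_T$. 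This is precisely what Proposition~\ref{prp:cotorsion} supplies: the proper resolution can be chosen with all syzygies $K_1,\dots,K_d$ in $\Ym_T$ (equivalently, all terms except the degree-zero one in $\add T=\Xm_T\cap\Ym_T$), and the paper's case analysis $\mathsf{D}^{\mathrm{b}}(\mod R)(\Xm_T,K_i[j])=0$ covers both ranges of $j$. You allude to ``proper, approximation-type resolutions'' but never invoke $Z_i\in\Ym_T$, so as written the hypothesis of the Ghost Lemma is not established. A smaller imprecision: the nonvanishing of the $d$-fold composite does not come from ``splitting the top of the resolution'' but from the dimension-shifting surjection $\Hom_R(X_d,T)\twoheadrightarrow\Ext^d_R(N,T)$ given by Yoneda product with the extension class (valid because the intermediate terms lie in ${}^{\perp}T$), which is nonzero since $\Ext^d_R(N,T)\neq 0$.
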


The inequality $\leq$ was shown in \cite[Theorem 5.3]{AAITY}.  
In this paper, we will prove the converse inequality by applying Auslander-Buchweitz theory and Ghost Lemma.

We apply Theorem 1.1 to the following settings.
For a commutative Noetherian local ring $R$ with a canonical module $\omega_R$,
we denote by $\mathsf{CM} R$ the category of maximal Cohen-Macaulay modules.
We call an $R$-algebra $\Lambda$ an \emph{$R$-order} if $\Lambda\in\mathsf{CM} R$.
We denote by $\mathsf{CM} \Lambda$ the category of maximal Cohen-Macaulay
$\Lambda$-modules (i.e. $\Lambda$-modules $X$ satisfying $X\in\mathsf{CM} R$).
As a special case of Theorem~\ref{CD}, we obtain the following results, which completes
the inequalities (1.2.1) and (4.2.1) in \cite{AAITY}.

\begin{corenum} \label{cor:comm}
Let $R$ be a commutative Noetherian local ring with a canonical module $\omega_R$ and $\dim R\ge1$.
Then
\begin{enumerate}
\item[$(1)$] We have an equality
$$(\mathsf{CM} R)\mbox{--} \td\ \mathsf{D}^{\mathrm{b}}(\mod R)=\dim R.$$
\item[$(2)$] More generally, for an $R$-order $\Lambda$, we have an equality
$$(\mathsf{CM}\La)\mbox{--} \td\ \mathsf{D}^{\mathrm{b}}(\mod \La)=\dim R.$$
\end{enumerate}
\end{corenum}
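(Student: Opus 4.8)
The plan is to deduce both statements directly from Theorem~\ref{CD} by exhibiting, in each case, a cotilting module whose associated perpendicular category is exactly the relevant category of maximal Cohen--Macaulay modules. For part~(1) the natural candidate is $T=\omega_R$ itself, and for part~(2) it is the canonical module of the order, $\omega_\La:=\Hom_R(\La,\omega_R)$. Since $\dim R\ge1$ is assumed and in both cases the injective dimension of the chosen module will turn out to equal $\dim R$, the hypothesis $\id_R T\ge1$ of Theorem~\ref{CD} is automatically met. (Note also that the mere existence of a canonical module in the usual sense already places us in the Cohen--Macaulay setting, which is the correct one for the dualizing behaviour below.)

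For part~(1), I would first recall the standard facts about the canonical module of a Cohen--Macaulay local ring: $\omega_R$ has finite injective dimension with $\id_R\omega_R=\dim R$, it satisfies $\Ext^i_R(\omega_R,\omega_R)=0$ for all $i>0$, and together with its coresolution property it is a cotilting $R$-module. The key identification is then that the perpendicular category coincides with the maximal Cohen--Macaulay modules: by the classical characterization, an $R$-module $M$ is maximal Cohen--Macaulay if and only if $\Ext^i_R(M,\omega_R)=0$ for all $i>0$, which is precisely the defining condition for $M\in\Xm_{\omega_R}$. Hence $\Xm_{\omega_R}=\mathsf{CM} R$, and applying Theorem~\ref{CD} with $T=\omega_R$ yields
$$(\mathsf{CM} R)\mbox{--}\td\ \mathsf{D}^{\mathrm{b}}(\mod R)=\Xm_{\omega_R}\mbox{--}\td\ \mathsf{D}^{\mathrm{b}}(\mod R)=\id_R\omega_R=\dim R.$$

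For part~(2), I would run the same argument one level up over $\La$, which is itself a module-finite (hence Noetherian) $R$-algebra, so that Theorem~\ref{CD} is applicable to it. The module $\omega_\La=\Hom_R(\La,\omega_R)$ plays the role of canonical module for the order $\La$: since $\La\in\mathsf{CM} R$, the functor $\Hom_R(\blank,\omega_R)$ restricts to a duality on $\mathsf{CM}\La$, from which one obtains that $\omega_\La$ is a cotilting $\La$-module with $\id_\La\omega_\La=\dim R$, and that $M\in\mathsf{CM}\La$ if and only if $\Ext^i_\La(M,\omega_\La)=0$ for all $i>0$, i.e. $\Xm_{\omega_\La}=\mathsf{CM}\La$. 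Applying Theorem~\ref{CD} over $\La$ with $T=\omega_\La$ then gives $(\mathsf{CM}\La)\mbox{--}\td\ \mathsf{D}^{\mathrm{b}}(\mod\La)=\id_\La\omega_\La=\dim R$.

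The routine bookkeeping---verifying the cotilting axioms and the perpendicular-category identifications---is standard, so the main technical obstacle I expect lies in the noncommutative injective-dimension computation in part~(2): one must check that $\id_\La\omega_\La$ equals $\dim R$ \emph{on the nose} rather than merely being finite, and that $\omega_\La$ genuinely satisfies the one-sided cotilting coresolution condition as a right $\La$-module. I would handle this by transporting the $R$-side data through the adjunction $\Hom_R(\La,\blank)$, using that $\La$ is maximal Cohen--Macaulay over $R$ so that $\Hom_R(\La,\omega_R)$ inherits the dualizing behaviour of $\omega_R$; pinning down the exact value of the injective dimension, and not just its finiteness, is the delicate point.
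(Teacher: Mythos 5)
Your proposal is correct and follows exactly the paper's route: the paper's proof is the one-line observation that $\omega_R$ (respectively $\omega_\La=\Hom_R(\La,\omega_R)$) is a cotilting module of injective dimension $\dim R$ whose perpendicular category is $\mathsf{CM} R$ (respectively $\mathsf{CM}\La$), so Theorem~\ref{CD} applies. You simply spell out the standard facts that the paper leaves implicit.
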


\begin{proof}
Since $\omega_R$ (respectively, $\omega_\La:=\Hom_R(\La,\omega_R)$)
is a cotilting module with injective dimension $\dim R$, the assertion follows from Theorem 1.1.
\end{proof}

\begin{rmkenum}{\cite[Remark 5.4]{AAITY}}
If $\id\ \omega_\La=\dim R=0$, 
then the equality in Corollary~\ref{cor:comm} (2) does not necessarily hold in general.
Namely, 
let $\La$ be a finite dimensional non-semisimple self-injective algebra over a field.
Then the right $\La$-module $\La$ is a cotilting module with $\id \ \La =0$ and $\Xm_{\La} = \mod \La$. 
However, $\langle \mod \La \rangle$ is different from $\mathsf{D}^{\mathrm{b}}(\mod \La)$. 
\end{rmkenum}

\section{Preliminaries}
In this section, we will introduce several concepts.
\begin{dfnenum}{(Aihara-Araya-Iyama-Takahashi-Y \cite{AAITY})}
\ 

Let $\Tm$ be a triangulated category with shift $[1]$ and 
$\Xm,\Ym$ full subcategories of $\Tm$.\\
$(1)$ The full subcategory $\Xm\ast\Ym$ of $\Tm$ is defined as follows:
$$\Xm\ast\Ym:=\{ M\in \Tm\ |\ \exists \text{ a triangle}: X\to M \to Y \to X[1] 
\ \text{with}\  X\in\Xm,Y\in\Ym \}.$$
Note that $(\Xm\ast \Ym)\ast \Zm=\Xm\ast(\Ym\ast \Zm) $ holds by the octahedral axiom.\\
$(2)$ Set $\langle\Xm\rangle:=\add\{X[i]\ |\ X\in\Xm, i\in\Integer \}$. 
And, for any positive integer $n$,
$$
\langle\Xm\rangle_n
:=\add(\underbrace{\langle\Xm\rangle*\langle\Xm\rangle*\cdots*\langle\Xm\rangle}_{n}).
$$
Clearly, $\langle\Xm\rangle_n$ is closed under shifts. \\
$(3)$ The {\it dimension} of $\Tm$ with respect to a subcategory $\Xm$ is defined as follows:
$$
\Xm \mbox{--} \td\ \Tm:=\inf\{n\geq 0\ |\ \Tm=\langle \Xm \rangle_{n+1} \}.
$$
\end{dfnenum}

When $\Xm = \add M$ for some object $M\in\Tm$, one can recover the dimension of triangulated category in the sense of Rouquier \cite{R2}.\\

The relative (derived) dimensions realize the several invariants for rings.
For instance, we have the following fact, which was proved by Krause and Kussin.  

\begin{exmenum}{(Krause-Kussin \cite[Lemma 2.4 and 2.5]{KK})} \label{E2}
$$(\pro R) \mbox{--} \td\ \mathsf{D}^{\mathrm{b}}(\mod R) =\gd\ R,$$
where $\pro R$ is the subcategory of $\mod R$ consisting of projective modules and $\gd\ R$ is the global dimension of $R$.
This is a special case of our main result.
\end{exmenum}

For an $R$-module $T$, 
we define the full subcategory ${}^{\perp} T$ of $\mod R$ as follows:
$${}^{\perp} T:=\{ X\in \operatorname{mod} R\ |\  \operatorname{Ext}_R^i(X, T)=0\ \text{for any}\  i >0 \}.$$ 
Then we will introduce the concept of a cotilting module. 
\begin{dfnenum}
An $R$-module $T$ is called {\it cotilting} if it satisfies the following three conditions:
\begin{enumerate}
\item[$(1)$] The injective dimension $\id\ T$ of $T$ is finite. 
\item[$(2)$] $T\in {}^{\perp} T$.
\item[$(3)$] For any $X\in {}^{\perp} T$, there exists a short exact sequence
$$0\to X\to T'\to X'\to0$$
with $T'\in\operatorname{add} T,\ X'\in {}^{\perp} T$.
\end{enumerate}
\end{dfnenum}

For a cotilting module $T$, we will write $\Xm_T$ instead of ${}^{\perp} T$. 
Moreover, we set $$\Ym_T:= (\Xm_T)^{\perp}= \{ Y\in \mod R \ |\ \Ext^i_R(X,Y)=0 \text{ for any } i>0 \text{ and any } X \in \Xm_T  \}. $$ 
Then by Auslander-Buchweitz approximation theory \cite{AB}, we have the following fact.

\begin{prpenum} \label{prp:cotorsion}
Let $T$ be a cotilting $R$-module with injective dimension $d$ and $M\in \mod R$. Then
\begin{enumerate}
\item[$(1)$] there exists an exact sequence $0 \to Y \to X \to M \to 0$ with $X\in \Xm_T$ and $Y \in \Ym_T$. 
\item[$(2)$] $M$ belongs to $\Ym_T$ if and only if there exists an exact sequence $$0\to T_d \to \cdots \to T_1 \to T_0 \to M \to 0$$ with $T_i \in \add T$.
\end{enumerate}
\end{prpenum}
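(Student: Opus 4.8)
The plan is to obtain the proposition as a direct application of the Auslander–Buchweitz machinery \cite{AB}, after translating the three cotilting axioms into the hypotheses of that theory. Concretely, I would regard $\Xm_T={}^{\perp}T$ as the ambient resolving subcategory and $\add T$ as its injective cogenerator, then read off $(1)$ as the $\Xm_T$-approximation sequence and $(2)$ as the identification of $\Ym_T$ with the class of modules of finite $\add T$-resolution dimension.

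First I would verify the structural hypotheses. The long exact sequence of $\Ext^{*}_R(\blank,T)$ shows at once that $\Xm_T$ is closed under extensions, direct summands and kernels of epimorphisms and contains $\pro R$, so $\Xm_T$ is resolving. Cotilting axiom $(2)$ gives $\add T\subseteq\Xm_T$, the very definition of $\Xm_T$ gives $\Ext^{>0}_R(\Xm_T,\add T)=0$, and cotilting axiom $(3)$ provides, for each $X\in\Xm_T$, a short exact sequence $0\to X\to T'\to X'\to 0$ with $T'\in\add T$ and $X'\in\Xm_T$; these three facts say precisely that $\add T$ is an injective cogenerator for $\Xm_T$ in the Auslander–Buchweitz sense.

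The key finiteness input — the step I expect to carry the real weight — is that every $M\in\mod R$ has finite $\Xm_T$-resolution dimension, bounded by $d=\id\ T$. I would prove this by a syzygy/dimension-shift argument: taking a projective resolution of $M$ and letting $\Omega^{d}M$ denote its $d$-th syzygy, one has $\Ext^{i}_R(\Omega^{d}M,T)\iso\Ext^{i+d}_R(M,T)=0$ for all $i>0$ since $\id\ T=d$; hence $\Omega^{d}M\in\Xm_T$, and the truncated resolution $0\to\Omega^{d}M\to P_{d-1}\to\cdots\to P_0\to M\to 0$ (with the $P_j$ projective, hence in $\Xm_T$) exhibits $\Xm_T$-resolution dimension $\le d$. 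Because $\mod R$ thus has finite $\Xm_T$-resolution dimension, the Auslander–Buchweitz existence theorem applies and yields, for each $M$, an approximation sequence $0\to Y\to X\to M\to 0$ with $X\in\Xm_T$ and $Y$ of finite $\add T$-resolution dimension; since finite $\add T$-resolution dimension forces membership in $(\Xm_T)^{\perp}=\Ym_T$ (the easy direction below), this is exactly statement $(1)$.

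For $(2)$, the implication $(\Leftarrow)$ is a routine dimension shift: given $0\to T_d\to\cdots\to T_0\to M\to 0$ with $T_i\in\add T$, splicing into short exact sequences and using $\Ext^{>0}_R(X,\add T)=0$ for $X\in\Xm_T$ yields $\Ext^{>0}_R(X,M)=0$, so $M\in\Ym_T$. For the converse I would invoke the Auslander–Buchweitz identification: since $\mod R$ has finite $\Xm_T$-resolution dimension, the class of modules of finite $\add T$-resolution dimension coincides with $(\Xm_T)^{\perp}=\Ym_T$, and moreover the $\add T$-resolution dimension agrees with the $\Xm_T$-resolution dimension on this class, which is $\le d$ by the previous paragraph. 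Hence any $M\in\Ym_T$ admits an $\add T$-resolution of length at most $d$, giving the desired sequence. The main obstacle is therefore concentrated not in any single computation but in matching conventions with \cite{AB} — checking that $(\Xm_T,\add T)$ satisfies their axioms and that their comparison of $\add T$-resolution dimension with $\Xm_T$-resolution dimension supplies the sharp bound $d$.
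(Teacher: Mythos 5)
Your proposal is correct and follows the same route as the paper, which simply cites Auslander--Buchweitz approximation theory for this proposition; you have supplied exactly the verification that $(\Xm_T,\add T)$ satisfies the AB axioms and the syzygy argument $\Omega^d M\in\Xm_T$ giving the finite $\Xm_T$-resolution dimension needed to invoke their existence theorem and the identification $\Ym_T=\widehat{\add T}$. No discrepancies to report.
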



A typical example of a cotilting module is the following.
\begin{exmenum}
$(1)$ The canonical module over a commutative Noetherian local ring $R$ is a cotilting $R$-module.\\
$(2)$ For a finite dimensional algebra $R$ over a field $k$ and a tilting $R^{op}$-module $T$ in the sense of \cite{M}, the $k$-dual of $T$ is a cotilting $R$-module.
\end{exmenum}

\section{Proof of our result} \label{section:proof}
We need the following to prove Theorem~\ref{CD}, which is called the {\em Ghost Lemma}. 

\begin{prpenum}{\cite[Lemma 4.11]{R2}} \label{lem:ghost}
Let $H_1, H_2, \cdots, H_{n+1}$ be cohomological functors on a triangulated category $\Tm$ 
and $f_i : H_i \to H_{i+1}$ morphisms between them. 
Let $\Xm_i$ be subcategories of $\Tm$ such that $f_i$ vanishes on $\Xm_i$ and $\Xm_i = \langle \Xm_i \rangle$.
Then the composite $f_n \circ \cdots \circ f_1$ vanishes on $\add (\Xm_1 \ast \cdots \ast \Xm_n)$. 
\end{prpenum}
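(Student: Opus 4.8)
The plan is to argue by induction on $n$, the number of natural transformations, and to reduce the assertion for $\add(\Xm_1\ast\cdots\ast\Xm_n)$ to the assertion for the honest objects of $\Xm_1\ast\cdots\ast\Xm_n$ only at the very end. I will fix the (standard) convention that a cohomological functor is contravariant, so that a triangle $A\to B\to C\to A[1]$ is sent to an exact sequence $H(C)\to H(B)\to H(A)$; the homological (covariant) convention is handled dually. For $n=1$ the claim is exactly the hypothesis that $f_1$ vanishes on $\Xm_1$. For the inductive step I peel off the last factor: given $M\in(\Xm_1\ast\cdots\ast\Xm_{n-1})\ast\Xm_n$, legitimate by associativity of $\ast$, I choose a triangle $N\xrightarrow{u}M\xrightarrow{v}X\to N[1]$ with $N\in\Xm_1\ast\cdots\ast\Xm_{n-1}$ and $X\in\Xm_n$, and set $g:=f_{n-1}\circ\cdots\circ f_1$, so that $g_N=0$ by the inductive hypothesis. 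It then suffices to prove $(f_n)_M\circ g_M=0$.

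The heart of the argument is a two-step containment of images into kernels. First, naturality of $g\colon H_1\to H_n$ along $u$ gives $H_n(u)\circ g_M=g_N\circ H_1(u)=0$, so $\operatorname{im} g_M\subseteq\ker H_n(u)$; applying the cohomological functor $H_n$ to the triangle and using exactness at $H_n(M)$ yields $\ker H_n(u)=\operatorname{im} H_n(v)$, whence $\operatorname{im} g_M\subseteq\operatorname{im} H_n(v)$. Second, naturality of $f_n\colon H_n\to H_{n+1}$ along $v$ gives $(f_n)_M\circ H_n(v)=H_{n+1}(v)\circ (f_n)_X=0$, since $(f_n)_X=0$ for $X\in\Xm_n$; thus $\operatorname{im} H_n(v)\subseteq\ker(f_n)_M$. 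Chaining the two, $\operatorname{im} g_M\subseteq\operatorname{im} H_n(v)\subseteq\ker(f_n)_M$, which is precisely $(f_n)_M\circ g_M=0$. This completes the induction for objects of $\Xm_1\ast\cdots\ast\Xm_n$, and since cohomological functors and natural transformations are additive, vanishing on each such object propagates to finite direct sums and to direct summands, giving vanishing on $\add(\Xm_1\ast\cdots\ast\Xm_n)$.

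The subtle point is not the algebra but deciding which map to isolate. It is tempting to chase the full two-step composite through the three stacked long exact sequences of $H_1,H_n,H_{n+1}$ using only $(f_n)_X=0$ and $g_N=0$; but the corresponding purely diagrammatic claim — three rows with only three-term exactness and a single vanishing vertical map at each end — is false in general, so three-term exactness alone cannot force vanishing. The correct device is to keep the remaining $n-1$ transformations bundled as the single natural transformation $g$ and to use exactness of one cohomological functor at one spot, converting both vanishing hypotheses into image-into-kernel containments of the same handedness, which then chain. I expect the genuinely delicate choice to be the variance: in the contravariant setting the two containments compose as above, whereas for covariant functors the same steps yield one condition on the domain and one on the image, which do not chain. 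That case I would handle by passing to $\Tm^{\mathrm{op}}$, where the closure hypotheses $\Xm_i=\langle\Xm_i\rangle$ transport the data faithfully and reduce it to the contravariant case already treated.
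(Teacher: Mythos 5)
Your proof is correct. The paper offers no proof of this proposition---it is quoted verbatim from Rouquier \cite[Lemma 4.11]{R2}---and your argument (induction on $n$, peeling off $\Xm_n$ via a triangle $N\to M\to X\to N[1]$, and chaining the two image-into-kernel containments $\operatorname{im} g_M\subseteq \operatorname{im} H_n(v)\subseteq \ker (f_n)_M$ obtained from naturality plus exactness of $H_n$ at $H_n(M)$) is essentially the standard one found there. Your attention to the variance is well placed: the contravariant convention is indeed the one relevant to this paper, where the $H_i$ are the representable functors $\mathsf{D}^{\mathrm{b}}(\mod R)(\blank,K_{i-1}[i-1])$.
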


Now we will prove Theorem~\ref{CD}.

\begin{proof}[Proof of Theorem~\ref{CD}]
In the rest, we show the converse inequality. 

Set $\id\ T=: d \geq 1$.  
Then we have $\mathsf{D}^{\mathrm{b}}(\mod R)(M,T[d])\iso \Ext^d_R (M, T) \not = 0$ for some $M \in \mod R$.
We will identify $\mathsf{D}^{\mathrm{b}}(\mod R)(-,-[i])$ with $\Ext^i_R (-, -) $ for any integer $i$ under the natural isomorphism. 
By Proposition~\ref{prp:cotorsion}, 
we have an exact sequence  
$$
[\xi_M : 0\to T_d \ya{\phi_d} \cdots \ya{\phi_2} T_1 \ya{\phi_1} T_0 \ya{\phi_0} M \to 0] \in \Ext^d_R (M, T),
$$
where $T_0 \in \Xm_T$ and $T_i \in \add T = \Xm_T \cap \mathcal{Y}_T$ for all $i=1,\cdots, d$.
Note that $\xi_M \not = 0$ since $\Ext^d_R (M, T)\not= 0$. 
Put $K_i := \operatorname{Im} \phi_i$ for each $i=0,\cdots,d$. 
Then for any $i=1, \cdots,d$, $K_i \in \mathcal{Y}_T $ 
and we have a short exact sequence
$$
\xi_i : 0 \to K_i \to T_{i-1} \to K_{i-1} \to 0.
$$
Regarding $\xi_M$ (respectively, $\xi_i$) as a morphism from $K_0=M$ to $K_d[d]=T_d[d]$
(respectively, from $K_{i-1}[i-1]$ to $K_{i}[i]$) in $\mathsf{D}^{\mathrm{b}}(\mod R)$, we have an equality
$$\xi_M=\xi_{d}\circ\cdots\circ\xi_1.$$

For any $i=1,\cdots d$, 
the morphism $\xi_i$ induces a morphism 
$$ f_i: \mathsf{D}^{\mathrm{b}}(\mod R) (\blank , K_{i-1}[i-1]) \to \mathsf{D}^{\mathrm{b}}(\mod R) (\blank, K_i[i]). $$
Clearly $\mathsf{D}^{\mathrm{b}}(\mod R) (\Xm_T,K_i[j])=0 $ holds if $(i,j)$ belongs to $\{0,\cdots, d-1 \}\times \Integer_{<0} $ or 
$\{1,\cdots, d\}\times \Integer_{>0}$.  
Therefore $\mathsf{D}^{\mathrm{b}}(\mod R) (\Xm_T, \xi_i [ j ])=0 $ holds for any $ i=1,\cdots,d$ and $ j\in \Integer$.
Namely, $f_i$ vanishes on $\langle \Xm_T \rangle$. 
By Proposition~\ref{lem:ghost}, the composite $f_{d} \circ \cdots \circ f_1$ vanishes on $\langle \Xm_T \rangle_d$.  
But the composite $f_{d} \circ \cdots \circ f_1$  sends the identity morphism of $K_{0} = M$ to $\xi_M=\xi_{d} \circ \cdots \circ \xi_1 \not=0$ 
and hence $M\not\in \langle \Xm_T \rangle_d$. 
Namely, we have the inequality
$$
\Xm_T \mbox{--} \td\ \mathsf{D}^{\mathrm{b}}(\mod R) \geq \id\ T,
$$
as desired.
\end{proof}

\end{document}